\newtheorem{definition}{Definition}
\newtheorem{theorem}{Theorem}
\newtheorem{lemma}{Lemma}
\title{Topology Design for Optimal Network Coherence}
\author{Tyler Summers, Iman Shames, John Lygeros, Florian D\"{o}rfler
\thanks{T. Summers, F. D\"{o}rfler, and J. Lygeros are with the Automatic Control Laboratory, ETH Z\"{u}rich. I. Shames is with the Department of Electrical Engineering, University of Melbourne.}}
\date{\today}                                           % Activate to display a given date or no date
\begin{document}
\maketitle

\begin{abstract}
We consider a network topology design problem in which an initial undirected graph underlying the network is given and the objective is to select a set of edges to add to the graph to optimize the coherence of the resulting network. We show that network coherence is a submodular function of the network topology. As a consequence, a simple greedy algorithm is guaranteed to produce near optimal edge set selections. We also show that fast rank one updates of the Laplacian pseudoinverse using generalizations of the Sherman-Morrison formula and an accelerated variant of the greedy algorithm can speed up the algorithm by several orders of magnitude in practice. These allow our algorithms to scale to network sizes far beyond those that can be handled by convex relaxation heuristics.
\end{abstract}

\section{Introduction}
% context: network topology design, synchronization are important
Among the most challenging and important problems in control and optimization of networks of dynamical systems is the design of topologies for sensing, control, and communication. One important dynamical process in a variety of networks is synchronization, and it is widely recognized that network topology properties play a key role.
% related work: 
There is now a growing literature on synchronization and consensus and many associated applications, including power networks and robotic vehicle networks. 

Recently, there has been work on quantifying the robustness of consensus dynamics to stochastic disturbances \cite{bamieh2012coherence,BB-DG:13,SM-NM:14}. The concept of network coherence has been proposed to quantify variance of states around the consensus subspace \cite{bamieh2012coherence} and is also closely related to the effective resistance of graphs \cite{klein1993resistance,ghosh2008minimizing}. Several recent papers have focused on leader selection problems to optimize coherence\cite{patterson2010leader,patterson2011network,lin2011algorithms,fardad2011algorithms,lin2013algorithms,fitch2013information,bushnell2014supermodular}. However, designing network topologies to optimize coherence by choosing sets of edges has received less attention. In \cite{xiao2007distributed,ghosh2008minimizing} the problems of choosing edge weights for a given network topology to optimize network coherence and effective resistance are considered and shown to be convex optimization problems. It is possible to modify the algorithm in \cite{xiao2007distributed,ghosh2008minimizing} to obtain an convex relaxation heuristic for edge selection. However, the resulting problems can still be difficult to solve for large networks, and purely combinatorial versions of these problems appear not to have been considered.

% contributions
We consider a network topology design problem in which an initial undirected graph underlying the network is given and the objective is to select a set of edges to add to the graph to optimize the coherence of the resulting network. Our main result is to show that network coherence is a submodular set function of the network topology. As a consequence, a simple greedy algorithm is guaranteed to produce near optimal edge set selections. The problem has a similar mathematical structure to recently studied set function optimization problems linking submodularity to controllability \cite{summers2014submodularity,summers2014optimal,cortesi2014submodularity} and rigidity \cite{shames2014rigid}. We also show that fast rank one updates of the Laplacian pseudoinverse using generalizations of the Sherman-Morrison formula \cite{meyer1973generalized} and an accelerated variant of the greedy algorithm \cite{minoux1978accelerated,krause2012submodular} can speed up the algorithm by several orders of magnitude in practice. These techniques allow our algorithms to scale to network sizes far beyond those that can be handled by convex relaxation heuristics based on \cite{xiao2007distributed,ghosh2008minimizing}.
% Further, we derive network coherence metrics for second-order dynamics, which can be used to model generator dynamics in power networks. We apply our network topology design technique for wide-area control of a power network. 
The results are illustrated with numerical examples. 

The rest of the paper unfurls as follows. Section II provides background on network coherence and submodular set functions. Section III presents our main results on topology design and algorithmic speed ups. Section IV exhibits the performance of the accelerated algorithm and illustrates the results on various types of fixed and random initial networks. Finally, Section V concludes.

\section{Network coherence and submodular set functions}
This section reviews notions of network coherence and submodular set functions. Topology design problems for optimizing network coherence can be formulated as set function optimization problems. 

\subsection{Network coherence}
Consider a network with underlying weighted undirected graph $G = (V,E,w)$ where $V = \{1,...,n \}$ is a set of nodes, $E \subseteq V \times V$ is a set of edges, and $w \in \mathbf{R}^{|E|}$ is a set of nonnegative weights associated with each edge. Suppose a scalar state variable is associated with each node and the network has consensus dynamics modeled by the stochastic differential equation 
\begin{equation} \label{sde}
dx(t) = -Lx(t)dt + dW
\end{equation}
where $L$ is the weighted Laplacian matrix and $dW$ is a vector of independent Gaussian white noise stochastic processes.
%zero mean white noise stochastic process. 
%[We are using an informal mathematical style here. Continuous time stochastic processes require stochastic differential equation machinery, more then what is sometimes given in related papers. Should either work in discrete time or work out the notation properly.] 

Without noise, it is well known that when the graph is connected, the states converge exponentially to a point the consensus subspace corresponding to the average value of the initial conditions. With additive noise, the state evolution becomes a stochastic process. The expected values of the states evolve according to deterministic consensus dynamics, but the actual average value undergoes Brownian motion and the states stochastically fluctuate around this value. When the graph is connected, the state variance relative to the average converges to a steady state value. Network coherence quantifies the steady-state variance of these fluctuations and can be considered as a measure of robustness of the consensus process to the additive noise; networks with small steady-state variance have high network coherence and can be considered to be more robust to noise that networks with low coherence.

Formally, network coherence is defined for connected graphs as the average steady-state deviation from the average value
\begin{equation}
\begin{aligned}
\mathcal{C} &= \lim_{t \rightarrow \infty} \sum_{i = 1}^n \mathbf{E} \left[(x_i(t) - \frac{1}{n}\sum_{j=1}^n x_j(t))^2\right] \\
&= \lim_{t \rightarrow \infty} \mathbf{E} [ x(t)^T P x(t)],
\end{aligned}
\end{equation}
where $P = I - \frac{1}{n} \mathbf{1} \mathbf{1}^T$ is the projection operator onto the disagreement subspace. The coherence then relates to the system $H_2$ norm as
\begin{equation}
\mathcal{C} = \textbf{trace} \int_0^\infty e^{-L^T t} P e^{-Lt} dt.
\end{equation}
This in turn can be shown to be related to the spectrum of the Laplacian matrix \cite{patterson2010leader}
\begin{equation}
\mathcal{C} = \frac{1}{2}\textbf{trace}(L^\dag) = \frac{1}{2}\sum_{i=2}^n \frac{1}{\lambda_i(L)},
\end{equation}
where $0 = \lambda_1 < \lambda_2 \leq ... \leq  \lambda_n$, i.e., the pseudoinverse trace of the Laplacian matrix is proportional to network coherence. This quantity is also proportional to the total effective resistance of a graph \cite{klein1993resistance}, which is known to be a monotone convex function of the edge weights for a given topology \cite{ghosh2008minimizing}.

\subsection{Submodularity}
Many combinatorial problems can be formulated as \emph{set function} optimization problems. For a given finite set $V = \{1,...,M \}$, a \emph{set function} $f: 2^V \rightarrow \mathbf{R}$ assigns a real number to each subset of $V$. For the set function optimization problem
\begin{equation} \label{optprob}
 \underset{{S \subseteq V, \ |S| = k }}{\text{maximize}} \quad f(S),
\end{equation}
the objective is to select a $k$-element subset of $V$ that maximizes $f$. This can be solved by brute force by  enumerating all possible subsets of size $k$, evaluating $f$ for all of these subsets, and picking the best subset. However, the number of possible subsets grows factorially as $|V|$ increases, so the brute force approach quickly becomes infeasible even for moderate $|V|$. 

Instead, there are structural properties of the set function $f$ that facilitate optimization. In particular, \emph{submodularity} plays similar roles in combinatorial optimization as convexity and concavity play in continuous optimization \cite{lovasz1983submodular,krause2012submodular}. It occurs often in applications \cite{boykov2001interactive,kempe2003maximizing,krause2008near}; is supported by an elegant and practically useful mathematical theory; and there are efficient methods for minimizing and approximation guarantees for maximizing submodular functions. 

\begin{definition}[Submodularity]
A set function $f: 2^V \rightarrow \mathbf{R}$ is called submodular if for all subsets $A \subseteq B \subseteq V$ and all elements $s \notin B$, it holds that
\begin{equation} \label{submod1}
f(A \cup \{s\}) - f(A) \geq f(B \cup \{s\}) - f(B),
\end{equation}
or equivalently, if for all subsets $A,B \subseteq V$, it holds that
\begin{equation} \label{submod2}
f(A) + f(B) \geq f(A\cup B) + f(A\cap B).
\end{equation}
\end{definition}
Intuitively, submodularity is a diminishing returns property where adding an element to a smaller set gives a larger gain than adding one to a larger set. 
The following definition and result from \cite{lovasz1983submodular} makes this precise and will be used to prove submodularity of a set function associated with network coherence.
\begin{definition}
A set function $f: 2^V \rightarrow \mathbf{R}$ is called monotone increasing if for all subsets $A, B \subseteq V$ it holds that
\begin{equation}
A \subseteq B \Rightarrow f(A) \leq f(B)
\end{equation}
and is called monotone decreasing if for all subsets $A, B \subseteq V$ it holds that
\begin{equation}
A \subseteq B \Rightarrow f(A) \geq f(B).
\end{equation}
\end{definition}
\begin{theorem}[\cite{lovasz1983submodular}] \label{theorem:mono}
A set function $f: 2^V \rightarrow \mathbf{R}$ is submodular if and only if the derived set functions $f_a : 2^{V - \{ a \} } \rightarrow \mathbf{R}$
$$f_a (X) = f(X \cup \{a\}) - f(X)  $$ 
are monotone decreasing for all $a \in V$.
\end{theorem}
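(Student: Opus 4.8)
The plan is to obtain both implications essentially by reading off inequality \eqref{submod1}, after correctly matching the quantifiers; the only thing requiring care is bookkeeping of the domains $2^{V-\{a\}}$ versus $2^V$.

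First I would prove the ``only if'' direction. Assume $f$ is submodular and fix an arbitrary $a \in V$. Given $X \subseteq Y \subseteq V - \{a\}$, the claim $f_a(X) \geq f_a(Y)$ unpacks to
\begin{equation}
f(X \cup \{a\}) - f(X) \;\geq\; f(Y \cup \{a\}) - f(Y).
\end{equation}
Since $Y \subseteq V - \{a\}$ we have $a \notin Y$, so this is exactly \eqref{submod1} with the substitution $A = X$, $B = Y$, $s = a$. Hence $f_a$ is monotone decreasing, and as $a$ was arbitrary this establishes the forward implication.

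Conversely, for the ``if'' direction, assume every derived function $f_a$ is monotone decreasing. To verify \eqref{submod1}, take arbitrary $A \subseteq B \subseteq V$ and $s \notin B$, and set $a := s$. Because $s \notin B$ we have $A \subseteq B \subseteq V - \{a\}$, so $f_a(A)$ and $f_a(B)$ are both defined, and the monotone decreasing property of $f_a$ applied to $A \subseteq B$ gives $f_a(A) \geq f_a(B)$, i.e. $f(A \cup \{s\}) - f(A) \geq f(B \cup \{s\}) - f(B)$. This is precisely \eqref{submod1}, so $f$ is submodular; if one instead wishes to conclude \eqref{submod2}, invoke the stated equivalence of the two forms of the definition.

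I do not expect any real obstacle here: the argument is a near-tautology once submodularity is taken in the two-set ``diminishing returns'' form \eqref{submod1}, and the only subtlety is aligning $(A,B,s)$ with $(X,Y,a)$ and tracking which subsets lie in $2^{V-\{a\}}$. The one place where genuine (short, telescoping/induction) work would be needed is if one started instead from the weaker single-element increment inequality (the case $|Y \setminus X| = 1$); since the paper defines submodularity via \eqref{submod1}–\eqref{submod2} directly, that step is not required.
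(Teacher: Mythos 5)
Your proof is correct. The paper does not actually prove this statement --- it is quoted from Lov\'asz's survey without proof --- so there is nothing to compare against; your argument is the standard one, and as you observe it is essentially a restatement of Definition 1 once submodularity is taken in the diminishing-returns form \eqref{submod1}: both directions follow by matching $(A,B,s)$ with $(X,Y,a)$ and checking that the relevant sets lie in $2^{V-\{a\}}$. Your closing remark is also apt: the only place where real work would hide is in the equivalence of \eqref{submod1} with \eqref{submod2} (or with the single-element increment version), which the paper's definition already asserts and which your proof correctly does not need.
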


%A set function is called \emph{supermodular} if the reversed inequalities in (\ref{submod1}) and (\ref{submod2}) hold, and is called \emph{modular} if it is both submodular and supermodular, i.e., for all subsets $A,B \subseteq V$, we have $f(A \cap B) + f(A\cup B) = f(A) + f(B)$. 
%A modular function has the following simple, equivalent characterization \cite{lovasz1983submodular}:
%\begin{theorem}[Modularity \cite{lovasz1983submodular}] \label{modularity}
%A set function $f: 2^V \rightarrow \mathbf{R}$ is modular if and only if for any subset $S \subseteq V$ it can be expressed as
%\begin{equation} \label{submod3}
%f(S) = w(\emptyset) + \sum_{s \in S} w(s)
%\end{equation}
%for some weight function $w : V \rightarrow \mathbf{R}$.
%\end{theorem}
%Modular set functions are analogous to linear functions and have the property that each element of a subset gives an independent contribution to the function value. Consequently, one can see that if $f$ is modular, the optimization problem \eqref{optprob} is easily solved by simply evaluating the set function for each individual element, sorting the result, and then choosing the top $k$ individual elements from the sorted list to obtain the best subset of size $k$. 

Maximization of monotone increasing submodular functions is NP-hard, but a greedy heuristic can be used to obtain a solution that is provably close to the optimal solution \cite{greedybound}. The greedy algorithm for \eqref{optprob} starts with an empty set, $S_0 \leftarrow \emptyset$, computes the gain $\Delta(a \mid {S_i}) = f(S_{i}\cup \{a\})-f(S_{i})$ for all elements $a\in V\backslash S_{i}$ and adds any element with the highest gain: \[S_{i+1} \leftarrow S_{i}\cup \{\arg \max_{a} \Delta(a \mid {S_i}) \; |\; a\in V\backslash S_{i}\}. \]
The algorithm terminates after $k$ iterations.

Performance of the greedy algorithm is guaranteed by a well known bound~\cite{greedybound}:
\begin{theorem}[\cite{greedybound}]
Let $f^*$ be the optimal value of the set function optimization problem \eqref{optprob}, and let $f(S_{greedy})$ be the value associated with the subset $S_{greedy}$ obtained from applying the greedy algorithm on \eqref{optprob}. If $f$ is submodular and monotone increasing, then 
\begin{equation}
	\frac{f^* - f(S_{greedy})}{f^* - f(\emptyset)}
	\leq \left(\frac{k-1}{k}\right)^k
	\leq \frac{1}{e}
	\approx 0.37.
\label{eq:greedy_bound}\end{equation}
\end{theorem}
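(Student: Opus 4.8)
The plan is to fix an optimal set $S^\star \subseteq V$ with $|S^\star| = k$ and $f(S^\star) = f^\star$, to track the greedy iterates $\emptyset = S_0 \subseteq S_1 \subseteq \cdots \subseteq S_k = S_{greedy}$, and to show that the residual gap $\delta_i := f^\star - f(S_i)$ contracts geometrically, by a factor $1 - 1/k$, at each greedy step.

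First I would prove the one-step estimate
\begin{equation*}
f^\star - f(S_i) \;\le\; k\,\bigl(f(S_{i+1}) - f(S_i)\bigr), \qquad i = 0,\dots,k-1.
\end{equation*}
Monotonicity gives $f^\star = f(S^\star) \le f(S^\star \cup S_i)$. Writing $S^\star \setminus S_i = \{b_1,\dots,b_m\}$ with $m \le k$ and telescoping,
\begin{equation*}
f(S^\star \cup S_i) - f(S_i) = \sum_{j=1}^{m} \Bigl[ f\bigl(S_i \cup \{b_1,\dots,b_j\}\bigr) - f\bigl(S_i \cup \{b_1,\dots,b_{j-1}\}\bigr) \Bigr].
\end{equation*}
Submodularity in the diminishing-returns form \eqref{submod1} (equivalently, the monotone-decreasing marginals of Theorem~\ref{theorem:mono}) bounds each summand by $f(S_i \cup \{b_j\}) - f(S_i)$, the gain of adjoining $b_j$ to the smaller set $S_i$; and the greedy selection rule makes every such gain at most $f(S_{i+1}) - f(S_i)$. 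Summing the $m \le k$ terms gives the estimate.

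Next I would turn this into a recursion: the estimate reads $\delta_i \le k(\delta_i - \delta_{i+1})$, hence $\delta_{i+1} \le (1 - 1/k)\,\delta_i$, and iterating from $i=0$ yields $\delta_k \le (1-1/k)^k \delta_0$, which is exactly
\begin{equation*}
f^\star - f(S_{greedy}) \le \left(\frac{k-1}{k}\right)^k \bigl(f^\star - f(\emptyset)\bigr).
\end{equation*}
The remaining inequality $(1-1/k)^k \le 1/e$ in \eqref{eq:greedy_bound} is the elementary fact that $1 - 1/k \le e^{-1/k}$, so the sequence $(1-1/k)^k$ increases to $1/e$.

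The hard part will be the telescoping/subadditivity step: decomposing $f(S^\star \cup S_i) - f(S_i)$ over the elements of $S^\star \setminus S_i$ and invoking submodularity with the inequality oriented correctly (gain on the smaller set dominating), while checking that $|S^\star \setminus S_i| \le k$ so that the factor $k$ is legitimate. The monotonicity step (which also guarantees $\delta_i \ge 0$, since $f(S_i) \le f^\star$ by extending $S_i$ to any $k$-element superset) and the closing geometric recursion are routine.
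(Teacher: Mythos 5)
Your argument is correct and is precisely the classical Nemhauser--Wolsey--Fisher proof of this bound; the paper itself states the theorem only by citation to \cite{greedybound} and offers no proof to compare against. The one detail to make explicit when you write it out is that passing from $m$ summands to the factor $k$ (i.e., $m\,(f(S_{i+1})-f(S_i)) \le k\,(f(S_{i+1})-f(S_i))$) uses nonnegativity of the greedy increment, which follows from monotonicity; with that noted, the telescoping step, the recursion $\delta_{i+1}\le(1-1/k)\delta_i$, and the limit $(1-1/k)^k\le 1/e$ are all sound.
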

This means that the greedy algorithm is guaranteed to produce a subset whose function value is within a constant factor of the value of the optimal subset. This is the best any polynomial time algorithm can achieve \cite{feige1998threshold}, assuming $P\neq NP$. Note that this is a worst-case bound; the greedy algorithm often performs much better than the bound in practice, which we will verify for the considered set of problems.

\section{Optimal topology design for network coherence}
Consider the problem of choosing a subset $\mathcal{E}$ of $k$ edges, each with a given weight, to add to a given weighted undirected graph $G = (V,E)$ to maximize the network coherence of the resulting graph $G_\mathcal{E}$, which can be formulated as a set function optimization problem:
\begin{equation}\label{eq:cohopt}
\underset{\mathcal{E} \subset V \times V \setminus E}{\text{minimize}} \quad  \mathbf{trace}(L_\mathcal{E}^\dag),
\end{equation}
where $L_\mathcal{E}$ is the resulting Laplacian. In the first subsection, we will assume that the given graph is connected so that the rank of the Laplacian remains constant as edges are added. In the third subsection, we will relax this assumption and present a modified algorithm for constructing trees with good network coherence.

\subsection{Network coherence is a submodular function of network topology}
To prove that network coherence is a submodular function of the network topology, the key structure is additivity of the Laplacian matrix in the edges. Specifically, let $M\in \mathbf{R}^{|E| \times |V|} $ denote the weighted incidence matrix of a graph $G = (V,E,w)$ which has a row $m_e^T$ for each edge $e = (i,j) \in E$ with elements (for $i>j$) $M_{ev} = w_e$ if $v = i$, $M_{ev} = -w_e$ if $v = j$, and $M_{ev} = 0$ otherwise. The Laplacian associated with any edge set $E$ can be written 
\begin{equation}
L_E = M^T M =  \sum_{e=1}^{|E|} m_e m_e^T = \sum_{e=1}^{|E|} L_e,
\end{equation}
which implies $L_{E_1 \cup E_2} = L_{E_1} + L_{E_2}$ for any disjoint pair of edge sets $E_1$ and $E_2$. We have the following result; the proof has almost identical structure to the proof for the inverse of the controllability Gramian in \cite{summers2014submodularity} and the proof for the pseudoinverse of the rigidity Gramian in \cite{shames2014rigid}.

\begin{theorem} \label{netcohsubmod}
Let $G=(V,E,w_E)$ be a given connected weighted graph, let $\mathcal{E} \subseteq V\times V \setminus E$ with weights $w_\mathcal{E}$, and let $L_{\mathcal{E}}$ be the weighted graph Laplacian matrix associated with the edge set $E  \cup \mathcal{E}$. Then the set function $f: V\times V \setminus E \rightarrow \mathbf{R}$ defined by $f(\mathcal{E}) = -\mathbf{trace}(L_{\mathcal{E}}^\dag)$ is submodular.
\end{theorem}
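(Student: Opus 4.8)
The plan is to invoke Theorem~\ref{theorem:mono}: it suffices to show that for every candidate edge $a \in V\times V \setminus E$, the derived set function $f_a(X) = f(X \cup \{a\}) - f(X)$ is monotone decreasing in $X$. Unwinding the definition of $f$, this derived function is
$$
f_a(X) = \mathbf{trace}(L_X^\dag) - \mathbf{trace}(L_{X \cup \{a\}}^\dag),
$$
i.e. the reduction in coherence obtained by adding edge $a$ on top of the edge set $E \cup X$. So the goal is: for $X \subseteq Y$ (with $a \notin Y$), the marginal drop in $\mathbf{trace}(L^\dag)$ from adding $a$ is at least as large when the base graph is smaller. Since any $Y$ is reachable from $X$ by adding one edge at a time, it is enough to prove this for $Y = X \cup \{b\}$ for a single extra edge $b$, then chain the inequalities.

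First I would set up the rank-one update machinery. Adding edge $a = (i,j)$ with weight $w_a$ changes the Laplacian by $L_{X \cup \{a\}} = L_X + w_a\, b_{ij} b_{ij}^T$ where $b_{ij} = e_i - e_j$ (absorbing the weight, this is $m_a m_a^T$). Because the graph is connected at every stage, all these Laplacians have the same kernel $\mathbf{1}$, and on the orthogonal complement they are positive definite; restricting everything to that $(n-1)$-dimensional subspace, the pseudoinverse behaves exactly like an ordinary inverse, and the Sherman--Morrison formula applies cleanly. This gives the closed form
$$
\mathbf{trace}(L_X^\dag) - \mathbf{trace}(L_{X\cup\{a\}}^\dag) = \frac{w_a\, b_{ij}^T (L_X^\dag)^2 b_{ij}}{1 + w_a\, b_{ij}^T L_X^\dag b_{ij}}.
$$
Call the right-hand side $g(L_X^\dag)$. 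The submodularity claim then reduces to showing $g$ is monotone in the sense that enlarging $X$ (which only adds positive semidefinite terms to $L_X$, hence shrinks $L_X^\dag$ in the Loewner order on the relevant subspace) does not increase this quantity.

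The main obstacle is exactly this last monotonicity step: writing $v = b_{ij}$ and $Q = L_X^\dag \succeq R = L_Y^\dag$ (Loewner order on the disagreement subspace, which follows from $L_X \preceq L_Y$ and operator monotonicity of matrix inversion), I must show $\dfrac{v^T Q^2 v}{1 + w_a v^T Q v} \ge \dfrac{v^T R^2 v}{1 + w_a v^T R v}$. This is not immediate because $Q^2$ is not operator-monotone in $Q$. The clean way is to note that the left-hand side of the original identity equals $v^T L_X^\dag v - v^T L_{X\cup\{a\}}^\dag v$, and to observe that the function $h(L) = v^T L^\dag v$ (the effective resistance across $\{i,j\}$ in the graph $E \cup X$, up to the weight bookkeeping) is itself convex and monotone decreasing in the added edges --- this is precisely the statement recalled in Section~II that total effective resistance, and indeed each pairwise effective resistance, is a monotone convex function of the edge set, cf.~\cite{ghosh2008minimizing,klein1993resistance}. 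Monotonicity of $h$ gives that the ``gain'' $h(L_X) - h(L_{X\cup\{a\}})$ is nonnegative; convexity of $h$ along the line segment $L \mapsto L + t\, w_a v v^T$ gives that this gain is a decreasing function of the base point, which is exactly $f_a(X) \ge f_a(Y)$. I would then chain single-edge additions to cover general $X \subseteq Y$, and cite the structural parallel with \cite{summers2014submodularity,shames2014rigid} for the analogous Gramian arguments. The only care needed throughout is to keep all pseudoinverse identities confined to the common complement of $\mathbf{1}$, where connectedness guarantees invertibility and the Sherman--Morrison and Loewner-monotonicity facts hold verbatim.
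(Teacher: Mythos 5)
Your skeleton matches the paper's: both reduce the claim, via Theorem~\ref{theorem:mono}, to showing that the marginal gain $f_a(X)=\mathbf{trace}(L_X^\dag)-\mathbf{trace}(L_{X\cup\{a\}}^\dag)$ is monotone decreasing in $X$, and both try to exploit the Loewner ordering $L_X\preceq L_Y$ on the complement of $\mathbf{1}$. You have also correctly located the crux: because $t\mapsto t^2$ is not operator monotone, $L_X^\dag\succeq L_Y^\dag$ does not by itself order the quantities $v^T(L^\dag)^2v$ that appear after the Sherman--Morrison update. The problem is that your proposed resolution does not close this gap. First, the identity you lean on is false: the update formula gives $\mathbf{trace}(L_X^\dag)-\mathbf{trace}(L_{X\cup\{a\}}^\dag)=w_a\,v^T(L_X^\dag)^2v/(1+w_a v^TL_X^\dag v)$, whereas $v^TL_X^\dag v-v^TL_{X\cup\{a\}}^\dag v=w_a\,(v^TL_X^\dag v)^2/(1+w_a v^TL_X^\dag v)$; the numerators $\|L_X^\dag v\|^2$ and $(v^TL_X^\dag v)^2$ coincide only when $v$ is an eigenvector of $L_X^\dag$. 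The trace decrement is the sum of effective-resistance decrements over \emph{all} node pairs, not the decrement across $a$'s own endpoints. Second, even for the genuine pairwise resistance $h(L)=v^TL^\dag v$, ``convex and monotone in the edge weights'' does not deliver what you need: convexity controls the second derivative along a single direction, while monotone decrease of $f_a$ requires a sign on the \emph{mixed} second derivative $\partial^2/\partial w_a\,\partial w_b$ --- the base point moves in the direction $L_b$ while the increment is taken in the direction $vv^T$. Convex, coordinatewise-monotone functions need not be supermodular, so this step is a genuine gap rather than a citation to \cite{ghosh2008minimizing}.

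For comparison, the paper attacks the crux directly: it interpolates $L(t)=L_{\mathcal{E}_1}+t(L_{\mathcal{E}_2}-L_{\mathcal{E}_1})$, differentiates $-\mathbf{trace}((L(t)+L_e)^\dag)+\mathbf{trace}(L(t)^\dag)$ using the constant-rank pseudoinverse derivative formula, and reduces everything to the single matrix inequality $(L(t)+L_e)^{\dag,2}\preceq L(t)^{\dag,2}$, concluding by the fact that the trace of a product of a positive and a negative semidefinite matrix is nonpositive. That asserted ordering of squared pseudoinverses is precisely the ``$Q^2$ is not operator monotone'' issue you flagged, so you were right to treat it as the heart of the matter; a complete argument must justify that ordering (or at least the weaker trace inequality against $L_{\mathcal{E}_2}-L_{\mathcal{E}_1}$), and your substitute for it does not survive inspection.
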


\begin{proof} Denote the set of potential edge choices by $\mathcal{E}_c = V \times V \setminus E$. Take any $e \in \mathcal{E}_c$ and consider the derived set functions $f_e : 2^{\mathcal{E}_c\setminus \{e\} } \mapsto \mathbb{R}$ given by
\begin{equation*}
\begin{aligned}
f_e(\mathcal{E}) &=  -\textbf{trace}(L_{\mathcal{E}\cup \{e\} }^\dag) + \textbf{trace}(L_{\mathcal{E}}^\dag) \\
            &= -\textbf{trace}((L_{\mathcal{E}}+L_{e})^\dag) + \textbf{trace}(L_{\mathcal{E}}^\dag).
\end{aligned}
\end{equation*}
Take any $\mathcal{E}_1 \subseteq \mathcal{E}_2 \subseteq \mathcal{E}_c \setminus \{ e\}$. By the additivity property of the Laplacian, it is clear that $\mathcal{E}_1 \subseteq \mathcal{E}_2 \Rightarrow L_{\mathcal{E}_1} \preceq L_{\mathcal{E}_2}$. Now define $L(t) = L_{\mathcal{E}_1} + t(L_{\mathcal{E}_2} - L_{\mathcal{E}_1})$ for $t \in [0,1]$. Obviously, $L(0) = L_{\mathcal{E}_1}$ and $L(1) = L_{\mathcal{E}_2}$. Now define
\begin{equation*}
\hat{f}_e(L(t)) =  -\textbf{trace}((L(t) + L_e )^\dag) + \textbf{trace}(L(t) ^\dag).
\end{equation*}
Note that $\hat{f}_e(L(0)) = f_e(\mathcal{E}_1)$ and $\hat{f}_e(L(1)) = f_e(\mathcal{E}_2)$. We have
\begin{equation*}
\begin{aligned}
&\frac{d}{dt}\hat{f}_e\left(L(t)\right) =\frac{d }{d t}\left[- \textbf{trace}((L(t)+L_{e})^\dag) + \textbf{trace}(L(t)^\dag) \right]\\
&=\textbf{trace} \left[(L(t)+L_{e})^\dag(L_{\mathcal{E}_2} - L_{\mathcal{E}_1})(L(t)+L_{e})^\dag \right] \\ &\quad- \textbf{trace} \left[L(t)^\dag(L_{\mathcal{E}_2} - L_{\mathcal{E}_1})L(t)^\dag\right]\\
&=\textbf{trace} \bigg[ \left((L(t)+L_{e})^{\dag , 2}-L(t)^{\dag , 2}\right)(L_{\mathcal{E}_2} - L_{\mathcal{E}_1}) \bigg] \leq 0.
\end{aligned}
\end{equation*}

To obtain the second equality we used the matrix derivative formula $\frac{d}{d t}  \textbf{trace} (L(t)^\dag)  =  \textbf{trace} (L(t)^\dag \frac{d}{d t}( L(t)) L(t)^\dag )$ which holds whenever $L(t)$ has constant rank for all $t$ \cite{golub1973differentiation}, which we have here since the given graph is connected and thus $\text{rank}(L_\mathcal{E}) = n-1$, $\forall \mathcal{E} \subseteq \mathcal{E}_c$. To obtain the third equality we used the cyclic property of trace. Since $(L(t)+L_{e})^{\dag ,2}-L(t)^{\dag ,2} \preceq 0$ and $L_{\mathcal{E}_2} - L_{\mathcal{E}_1} \succeq 0$, the last inequality holds because  the trace of the product of a positive and negative semidefinite matrix is non-positive.
Since
\begin{equation*}
\hat{f}_e(L(1)) = \hat{f}_e(L(0)) + \int_0^1 \frac{d}{d t} \hat{f}_e(L(t)) dt,
\end{equation*}
it follows that $\hat{f}_e(L(1)) = f_e(\mathcal{E}_2) \leq \hat{f}_e(L(0)) = f_e(\mathcal{E}_1)$. Thus, $f_e$ is monotone decreasing, and $f$ is submodular by Theorem \ref{theorem:mono}.

Finally, it can be seen from additivity of the Laplacian that $f$ is monotone increasing, which just means that adding an edge to the graph cannot decrease its coherence. 
\end{proof}

As a consequence, the greedy algorithm is guaranteed to produce a near optimal edge set selection. If the given graph is not connected, the Laplacian changes rank as edges are added. This means that $L(t)$ in the proof does not have constant rank, so $\hat{f}_e$ is not differentiable, and the proof breaks down.

\subsection{Accelerated greedy algorithm and fast rank-one updates} \label{fastgreedy}
For a sparse connected network, the number of possible edges to be added scales quadratically with the number of nodes. So for the standard greedy algorithm, the marginal gain function may need to be evaluated many times. 

Two techniques can be used to significantly speed up the greedy algorithm. First, an accelerated form of the greedy algorithm can be used to reduce the number of times that the marginal gain function is evaluated by exploiting submodularity of the set function \cite{minoux1978accelerated}. In particular, at each iteration an element is selected that maximizes the marginal benefit of the function given previously chosen elements. The key observation is that submodularity implies $\Delta(s | S_{i+1} ) \leq \Delta(s | S_i)$; i.e., the marginal benefits of each element can never increase between algorithm iterations. In the accelerated greedy algorithm, after the first iteration, a list of marginal benefits from the previous iteration sorted in decreasing order is maintained. The marginal benefits for the next iteration are then updated starting from the top of this list. If during this process an element remains at the top of the list after this update, submodularity guarantees that this element has maximal marginal gain, and the algorithm can move to the next iteration without needing to compute the marginal gain for a potentially very large number of elements. Otherwise, the list is resorted and the process continues. Although the worst case complexity of this accelerated variant is the same as the naive greedy algorithm, speedups of multiple orders of magnitude have been observed in practice \cite{krause2012submodular}.

Second, the individual marginal gain function calls can be cheaply performed as rank-one updates using a generalized Sherman-Morrison formula. Note that computing the marginal gain requires computing the trace of the pseudoinverse of a matrix following a rank one update. Although the standard Sherman-Morrison formula does not hold in general for updating the pseudoinverse of a matrix (as opposed to the inverse) \cite{meyer1973generalized}, we show that for the case of the Laplacian pseudoinverse, one can use the standard formula with the  inverse replaced by the pseudoinverse. 
\begin{lemma} \label{smupdate}
For any connected weighted graph $G = (V,E,w)$ with weighted Laplacian matrix $L_E$ and any edge $e \in V\times V \setminus E$ with given weight $w_e$ and associated weighted incidence matrix row $m_e$, we have
\begin{equation} \label{smpseudo}
L_{E \cup \{e\}}^\dag = (L_E + m_e m_e^T)^\dag = L_E^\dag - \frac{1}{\beta} L_E^\dag m_e m_e^T L_E^\dag, 
\end{equation}
where $\beta = 1 + m_e^T L_E^\dag m_e$, and correspondingly,
\begin{equation} \label{smpseudotrace}
\mathbf{trace}(L_{E \cup \{e\}}^\dag) = \mathbf{trace}(L_E^\dag) - \frac{1}{\beta} || L_E m_e ||^2.
\end{equation}
\end{lemma}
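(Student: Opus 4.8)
The plan is to verify directly that the claimed right-hand side, which I will write as $X := L_E^\dag - \tfrac{1}{\beta}L_E^\dag m_e m_e^T L_E^\dag$, satisfies the four Moore--Penrose conditions with respect to $A := L_E + m_e m_e^T = L_{E\cup\{e\}}$; by uniqueness of the pseudoinverse this establishes \eqref{smpseudo}. The reason the ordinary Sherman--Morrison formula survives the passage to pseudoinverses here (it does not in general) comes down to one structural fact: the incidence row $m_e$ of any edge has entries summing to zero, so $m_e^T\mathbf{1}=0$, i.e., $m_e\in\mathbf{1}^\perp$; and since $G$ is connected, $\mathbf{1}^\perp=\mathrm{range}(L_E)=\mathrm{range}(L_E^\dag)$ and $L_EL_E^\dag=L_E^\dag L_E=P$, the orthogonal projector onto $\mathbf{1}^\perp$ (which is precisely $P=I-\tfrac1n\mathbf{1}\mathbf{1}^T$). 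Observe also that $L_E^\dag\succeq 0$ forces $\beta=1+m_e^TL_E^\dag m_e\geq 1>0$, so $X$ is well defined; this plays the role of the nonsingularity hypothesis in the classical formula and is automatic in our setting.

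The computation uses only the identities $L_EL_E^\dag=L_E^\dag L_E=P$, $Pm_e=m_e$, and $m_e^TL_E^\dag m_e=\beta-1$. Expanding $AX=(L_E+m_e m_e^T)(L_E^\dag-\tfrac1\beta L_E^\dag m_e m_e^T L_E^\dag)$ into four terms, the first is $L_EL_E^\dag=P$, and the remaining three reduce (via the identities above) to $-\tfrac1\beta m_e m_e^T L_E^\dag$, $+m_e m_e^T L_E^\dag$, and $-\tfrac{\beta-1}{\beta}m_e m_e^T L_E^\dag$, whose coefficients sum to zero; hence $AX=P$. Since $P$ is symmetric and $A,X$ are symmetric, $XA=(AX)^T=P$ as well, which settles the two symmetry conditions. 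For the remaining two, note that $X=L_E^\dag\big(I-\tfrac1\beta m_e m_e^T L_E^\dag\big)$, so $\mathrm{range}(X)\subseteq\mathrm{range}(L_E^\dag)=\mathbf{1}^\perp$ and thus $PX=X$; also $PA=A$ because the columns of $L_E$ lie in $\mathbf{1}^\perp$ and $Pm_e m_e^T=m_e m_e^T$. Therefore $AXA=PA=A$ and $XAX=PX=X$, all four conditions hold, and $X=A^\dag=L_{E\cup\{e\}}^\dag$.

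The trace identity then drops out of \eqref{smpseudo} by taking traces and using the cyclic property together with symmetry of $L_E^\dag$: $\mathbf{trace}\big(L_E^\dag m_e m_e^T L_E^\dag\big)=m_e^TL_E^\dag L_E^\dag m_e=\|L_E^\dag m_e\|^2$, so $\mathbf{trace}(L_{E\cup\{e\}}^\dag)=\mathbf{trace}(L_E^\dag)-\tfrac1\beta\|L_E^\dag m_e\|^2$, which is \eqref{smpseudotrace}.

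I do not anticipate a genuine obstacle: the proof is a verification, and its one substantive ingredient is recognizing that $m_e\in\mathrm{range}(L_E)$ — guaranteed by $m_e\perp\mathbf{1}$ and connectivity of $G$ — is what makes the pseudoinverse behave like a true inverse under this rank-one perturbation. As an alternative that makes this transparent, one may fix $Q\in\mathbf{R}^{n\times(n-1)}$ with orthonormal columns spanning $\mathbf{1}^\perp$: then $L_E^\dag=Q(Q^TL_EQ)^{-1}Q^T$ with $Q^TL_EQ\succ 0$, and since $m_e\in\mathbf{1}^\perp$ we have $m_e=Q(Q^Tm_e)$, so $L_{E\cup\{e\}}=Q\big(Q^TL_EQ+(Q^Tm_e)(Q^Tm_e)^T\big)Q^T$; applying the classical Sherman--Morrison formula to the invertible matrix in parentheses and pre/post-multiplying by $Q$ and $Q^T$ reproduces \eqref{smpseudo}. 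Either way, the single hypothesis to track is connectivity of $G$, which is where the general case would break down, exactly as in the discussion following Theorem \ref{netcohsubmod}.
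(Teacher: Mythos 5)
Your proof is correct, and it takes a genuinely different route from the paper's. The paper invokes Theorem 3 of Meyer's result on generalized inverses of rank-one-modified matrices, which expresses $(L_E + m_e m_e^T)^\dag$ as the Sherman--Morrison term plus corrections built from $u = (I - L_E L_E^\dag)m_e$, and then observes that $u = \frac{1}{n}\mathbf{1}\mathbf{1}^T m_e = 0$ because the incidence row sums to zero, so the corrections vanish. You instead verify the four Moore--Penrose axioms directly for the candidate $X$, using only $L_EL_E^\dag = L_E^\dag L_E = P$, $Pm_e = m_e$, and $\beta \geq 1 > 0$; your computation of $AX=P$ (the coefficients of $m_em_e^TL_E^\dag$ summing to $-\tfrac{1}{\beta}+1-\tfrac{\beta-1}{\beta}=0$) and the range arguments giving $AXA=A$ and $XAX=X$ are all sound, as is the alternative reduction to the classical Sherman--Morrison formula on $\mathbf{1}^\perp$ via the isometry $Q$. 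What your approach buys is self-containedness and a clear identification of the one hypothesis, $m_e \in \mathrm{range}(L_E)$, that makes the naive formula survive the passage to pseudoinverses; what the paper's approach buys is brevity and a template that extends to the disconnected setting of Section III-C, where $u \neq 0$ and the full Meyer formula (Theorem 1 there) is actually needed.

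One discrepancy is worth flagging explicitly: your (correct) trace computation yields
\begin{equation*}
\mathbf{trace}(L_{E\cup\{e\}}^\dag) = \mathbf{trace}(L_E^\dag) - \frac{1}{\beta}\,\|L_E^{\dag} m_e\|^2,
\end{equation*}
with the pseudoinverse inside the norm, whereas \eqref{smpseudotrace} as printed has $\|L_E m_e\|^2$. Taking the trace of \eqref{smpseudo} unavoidably produces $m_e^T L_E^{\dag}L_E^{\dag}m_e = \|L_E^\dag m_e\|^2$, so the printed right-hand side is a typo (a missing $\dag$) rather than an error in your argument; your version is the one that must be used in the $\mathcal{O}(n)$ marginal-gain evaluations described after the lemma.
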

\begin{proof}
From Theorem 3 in \cite{meyer1973generalized}, it holds that
$$ (L_E + m_e m_e^T)^\dag = L_E^\dag + \frac{1}{\beta}u k^T L_E^{\dag} - \frac{\beta}{\sigma}pq,  $$
where $ u = (I - L_E L_E^\dag)m_e$, $k = L_E^\dag m_e$, $\beta = 1 + m_e^T L_E^\dag m_e$, $p = \frac{||k||^2}{\beta}u + k$, $q = (\frac{||u||^2}{\beta}k^T L_E^\dag + k^T)$, and $\sigma = || k ||^2 || u ||^2 + \beta^2$. But since $I - L_E L_E^\dag = \frac{1}{n}\mathbf{1}\mathbf{1}^T$, it follows that for any incidence matrix row, $(I-LL^\dag)m_e = u = 0$, so the expression collapses immediately to \eqref{smpseudo}. Finally, \eqref{smpseudotrace} follows from the linearity and cyclic properties of trace. %\qed
\end{proof}
This means that the full Laplacian pseudoinverse needs to be computed from scratch only once at the beginning for the given connected graph, requiring $\mathcal{O}(n^3)$ operations. Then within each iteration, \eqref{smpseudotrace} can be used to evaluate the marginal gain for each edge in $\mathcal{O}(n)$ operations since $m_e$ has only two non-zero entries. Finally, after the optimizing edge has been found, the Laplacian pseudoinverse can by updated using \eqref{smpseudo} in $\mathcal{O}(n^2)$ operations. 

%\subsection{Constructing trees with optimal coherence}
\subsection{Constructing Tree Graphs with Optimal Coherence and Nonidentical Edge Weights} \label{tree}
We now relax the assumption that the given graph is connected and consider the problem of constructing a tree graph with optimal network coherence. It is known that when the edge weights are identical, a star graph is the tree with optimal network coherence; see, e.g., \cite{ellens2011effective}. However, when non-identical edge weights are given, it is not obvious how to find the tree with optimal coherence. Given a node set $V=\{1,\dots,n\}$ and weights $w_e \geq 0$ associated with each possible edge $e \in V \times V$, the goal is to find an edge set $\mathcal{E}$ such that the undirected graph $\mathcal{G}=(V,\mathcal{E})$ is a connected tree with minimum $\mathbf{trace}(L_\mathcal{E}^\dag)$. 
%We denote $ (i,j)$ to be any possible edge connecting vertices $i$ to $j$  with weight $w_{i,j}\geq 0$ and, with a mild abuse of notation, we do not differentiate between $(i,j)$ and $(j,i)$. 
This can be expressed as
\begin{equation}\label{eq:tree_const}
\begin{split}
\underset{\mathcal{E} \subset V\times V }{\text{minimize}} \quad &  \mathbf{trace}(L_\mathcal{E}^\dag)\\
\text{subject to} \quad & | \mathcal{E}|=n-1, \quad \mathbf{rank}(L_\mathcal{E})=n-1.
\end{split}
\end{equation}

A modified greedy algorithm detailed in Algorithm \ref{alg1-tree-lapinv} can be used as a heuristic for this problem. 
\begin{algorithm}                      % enter the algorithm environment
\caption{Finding a tree with small $ \mathbf{trace}(L_\mathcal{E}^\dag)$.}          % give the algorithm a caption
\label{alg1-tree-lapinv}                           % and a label for \ref{} commands later in the document
\begin{algorithmic}[1]                % enter the algorithmic environment
    \REQUIRE $V, \; w_e$ for each $e \in V\times V$
%    \ENSURE $y = x^n$
    \STATE $\mathcal{E} \leftarrow \{ \mathop{\text{argmax}}_{e\in V\times V} w_e \}$
    \STATE $\overline{\mathcal{V}} \leftarrow \{i,j|(i,j)\in \mathcal{E}\}$
    \WHILE{$|\mathcal{E}| \leq n-1$}
    	\STATE $\overline{\mathcal{E}} \leftarrow \{(i,j) \mid (i,j)\in V\times V \setminus \mathcal{E}, \{i,j\}\cap \overline{\mathcal{V}}\neq \emptyset\}$
        \STATE $e = \mathop{\text{argmin}}_{e\in \overline{\mathcal{E}}} \quad \mathbf{trace}(L_{\mathcal{E}\cup \{e\}}^\dag)$
        \STATE $\mathcal{E}\leftarrow \mathcal{E} \cup e$
        \STATE $\overline{\mathcal{V}} \leftarrow \overline{\mathcal{V}} \cup \{i,j \mid e=(i,j)\}$
    \ENDWHILE
\end{algorithmic}
\end{algorithm}
The only difference is that the feasible edge set described in line 4 prevents cycles from forming, which would involve redundant edges in terms of connectivity. As in Lemma \ref{smupdate}, the value of $\mathbf{trace}(L_{\mathcal{E}\cup \{e\}}^\dag)$ can be calculated more efficiently. From Theorem 1 of \cite{meyer1973generalized}
\begin{equation}
\begin{split}
&\mathbf{trace}(L_{\mathcal{E}\cup \{e\}}^\dag)=\mathbf{trace}(L_\mathcal{E} + m_e m_e^T)^\dag \\
&=  \mathbf{trace} (L_\mathcal{E}^\dag) -\dfrac{\mathbf{trace} \Big (L_\mathcal{E}^\dag  m_e m_e^T (I-L_\mathcal{E} L_\mathcal{E}^\dag)\Big )}{\| (I-L_\mathcal{E} L_\mathcal{E}^\dag) m_e\|^2}\\
&-\dfrac{ \mathbf{trace}\Big ( (I-L_\mathcal{E} L_\mathcal{E}^\dag)  m_e m_e^T L_\mathcal{E}^\dag \Big )}{\| (I-L_\mathcal{E} L_\mathcal{E}^\dag)m_e\|^2}
+\dfrac{1+ m_e^T L_\mathcal{E}^\dag m_e}{\| (I-L_\mathcal{E} L_\mathcal{E}^\dag)m_e\|^2}.
\end{split}
\end{equation}
Since $L_\mathcal{E}^\dag L_\mathcal{E} L_\mathcal{E}^\dag=L_\mathcal{E}^\dag$, we have
$
\mathbf{trace}\Big ( (I-L_\mathcal{E} L_\mathcal{E}^\dag) m_e m_e^T L_\mathcal{E}^\dag \Big )=\mathbf{trace}\Big (  L_\mathcal{E}^\dag(I-L_\mathcal{E} L_\mathcal{E}^\dag) m_e m_e^T \Big )=0.
$
Hence,
\begin{equation}
\begin{split}
\mathbf{trace}(L_{\mathcal{E}\cup \{e\} }^\dag)&=  \mathbf{trace}(L_\mathcal{E}^\dag)+\dfrac{1+ m_e^T L_\mathcal{E}^\dag m_e}{\| (I-L_\mathcal{E} L_\mathcal{E}^\dag)m_e\|^2}.
\end{split}
\end{equation}
Thus, the marginal gain computations in line 5 of Algorithm \ref{alg1-tree-lapinv} can be written as
$$e = \mathop{\text{argmin}}_{e\in \overline{\mathcal{E}}} \quad \dfrac{1+ m_e^T L_\mathcal{E}^\dag m_e}{\| (I-L_\mathcal{E} L_\mathcal{E}^\dag)m_e\|^2}.$$

%We briefly comment on the number of times an oracle needs to be called to solve the optimisation problem in line 1 for once and the problem in line 7 for $n-2$ times. Solving the problem in line 1 requires $\dfrac{n(n-1)}{2}$ many checks of a scalar value. Moreover, $|\overline{\mathcal{E}}^k|=(k+2)(n-(k+2))$. In turn evaluating the cost function for each $e\in \overline{\mathcal{E}}^k$ requires $O(1)$ and multiplications. Evaluation of $L(\mathcal{G}^k)^\dag $ requires $O((k+2)^2)$ operations. The total number of flops to run \ref{alg1-tree-lapinv-alternate}  is calculated to be:
%\begin{equation}
%\begin{split}
%N_C&=\dfrac{n(n-1)}{2}+ \sum_{k=0}^{n-3} O((k+2)^2) +\sum_{k=0}^{n-3} O((k+2)(n-(k+2)))\\
%&=O(n^3)
%\end{split}
%\end{equation}

%First we define $\pi(i,j)$as the path connecting vertices $i$ and $j$. We define the distance between $i$ and $j$ as
%\begin{equation}
%d(i,j)=\min_{\pi(i,j)\subset \mathcal{E}} \sum_{e\in\pi(i,j)}\dfrac{1}{w_e}.
%\end{equation}
%From \cite{} we know 
%$$
%\sum_{i,j\in\mathcal{V}}d(i,j) = n \mathbf{trace}(L(\mathcal{G})^\dag).
%$$
% Thus \eqref{eq:tree_const} can be recast as
%\begin{equation}\label{eq:tree_const_dist}
%\begin{split}
%\min_{\mathcal{E}}\quad & \sum_{i,j\in\mathcal{V}}d(i,j)\\
%\text{s.t.} \quad & \mathcal{E}\subset \mathcal{E}_C, | \mathcal{E}|=n-1, \mathcal{G} \text{ is connected.}
%\end{split}
%\end{equation}
Note that Algorithm \ref{alg1-tree-lapinv} can be used to add a new node or a set of new vertices to an existing graph such that resulting graph with a  small $ \mathbf{trace}(L_\mathcal{E}^\dag)$ as well.

In the Appendix, we show that Algorithm \ref{alg1-tree-lapinv} returns a star graph when the edge weights are identical, which provides an alternative, and to the authors' knowledge novel, inductive proof of the fact that star graphs have optimal network coherence among all trees. 

\section{Illustrative numerical examples}
In this section, we illustrate the results with numerical examples.

\subsection{Naive vs. fast greedy algorithm}
We first compare the performance of the naive greedy algorithm with that of the modified greedy algorithm using the improvements described in Section \ref{fastgreedy}. We applied both algorithms to compute a set of $n$ edges to add to $n$-node Erd\H{o}s-R\'{e}nyi random networks with the edge probability chosen to be slightly above $\ln (n) /n$ to ensure connectivity of the generated base graph. Note that even for the smallest networks considered here, e.g. choosing 20 edges to add from around 150, brute force computation is not feasible. However, the greedy algorithm is guaranteed by Theorem \ref{netcohsubmod} to produce a network topology with sub optimality-guarantees. Figure \ref{fig:fastvnaive} shows computation times for the two algorithms for various network sizes on a laptop with a 1.7 GHz Intel Core i7 processor. One can see a substantial increase in computation time for the naive greedy algorithm around 100 nodes (corresponding to about 4500 possible edges to add). This is also roughly where convex relaxation heuristics based on \cite{xiao2007distributed,ghosh2008minimizing} and using general purpose semidefinite programming solvers begin to have difficulties. The fast greedy algorithm displays significantly better scaling properties: for the data at 120 nodes, the fast algorithm exhibits a factor of 350 speed-up. Our techniques (using unoptimized Python code) were able to near-optimally modify networks with up to 1,000 nodes, with nearly half a million decision variables associated possible edges, in a few hours, which is far beyond the capabilities of current state-of-the-art general purpose semidefinite programming solvers.  
\begin{figure}
\begin{center} 
\resizebox{\linewidth}{!}{\includegraphics{./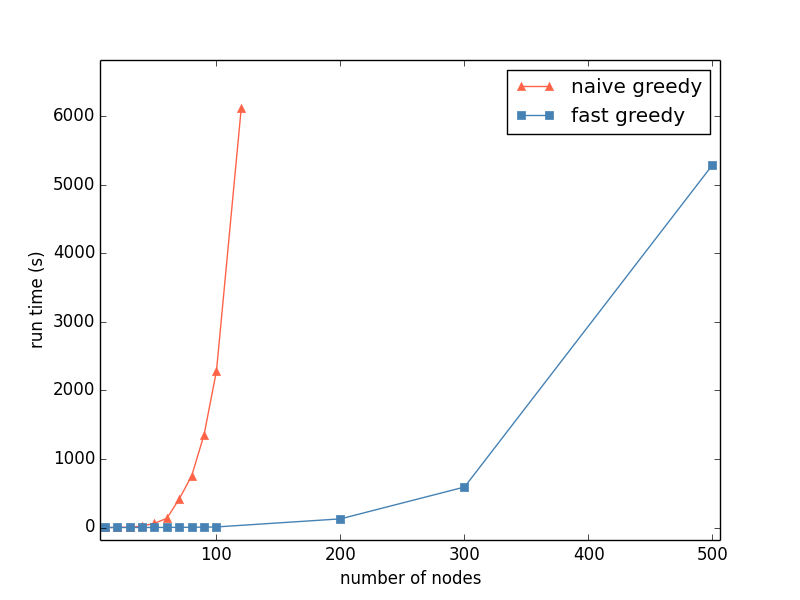}}
\caption{Computation times for the naive and fast versions of the greedy algorithm. The algorithms were applied to compute a set of $n$ edges to add to $n$-node Erd\H{o}s-R\'{e}nyi random networks with the edge probability chosen to be slightly above $\ln n /n$ to ensure connectivity of the generated base graph. For the data at 120 nodes, the fast algorithm exhibits a factor of 350 speed-up.}  \label{fig:fastvnaive}
\end{center}
\end{figure}

\subsection{Experiments with cycles and random graphs}
We then examined the qualitative behavior of the algorithm for cycles and other types of random graphs. Figure \ref{fig:cycle} shows the results of applying to greedy algorithm to add various numbers of edges to a cycle on 50 nodes. The initial added edges tend to be long distance links, reminiscent of Watts-Strogatz small world graphs \cite{watts1998collective}, but with the link distances intentionally chosen by the algorithm to optimize coherence. When many edges are added, the resulting graph tends to be nearly regular, indicating that small world regular graphs have near optimal coherence. A similar story emerges for Erd\H{o}s-R\'{e}nyi random graphs. Figure \ref{fig:erdos} shows the result of applying the greedy algorithm. In small added edge sets, the added edges tend to connect distant low-degree vertices, and in large added edge sets, the result tends to a regular graph with small-world-like long distance connections. Finally, we also applied the greedy algorithm to  Barabasi-Albert scale free networks, in which a preferential attachment mechanism leads to power law degree distributions. Figure \ref{fig:ba} shows a set of 10 edges added to a scale free tree on 100 nodes. We observe that some of the added edges tend to connect highly connected hubs together, while others make low-degree long-distance connections.

\begin{figure}
\begin{center} 
\resizebox{\linewidth}{!}{\includegraphics{./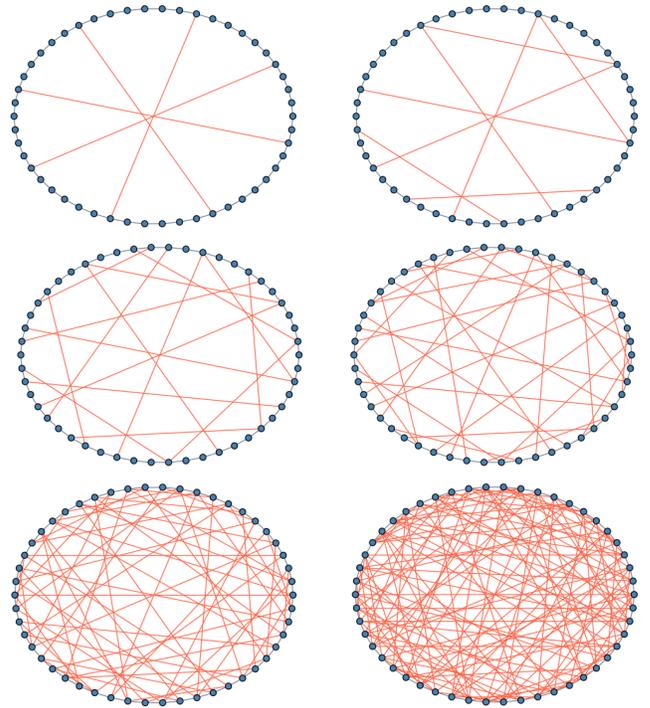}}
\caption{Adding various numbers of edges to a cycle graph on 50 nodes. }  \label{fig:cycle}
\end{center}
\end{figure}

\begin{figure}
\begin{center} 
\resizebox{\linewidth}{!}{\includegraphics{./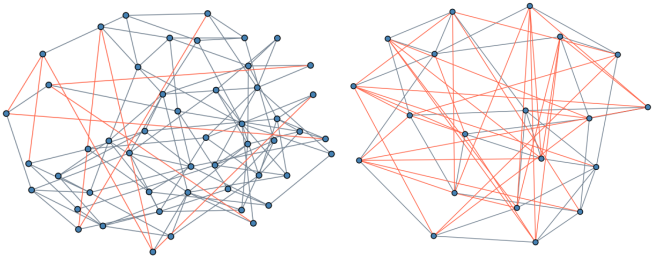}}
\caption{Adding 10 edges to an Erd\H{o}s-R\'{e}nyi random graph on 50 nodes (left); adding 35 edges to an Erd\H{o}s-R\'{e}nyi random random graph on 20 nodes (right). In small added edge sets, the added edges tend to connect distant low-degree vertices, and in large added edge sets, the result converges to a regular graph with small-world-like long distance connections.}  \label{fig:erdos}
\end{center}
\end{figure}

\begin{figure}
\begin{center} 
\resizebox{0.93\linewidth}{!}{\includegraphics{./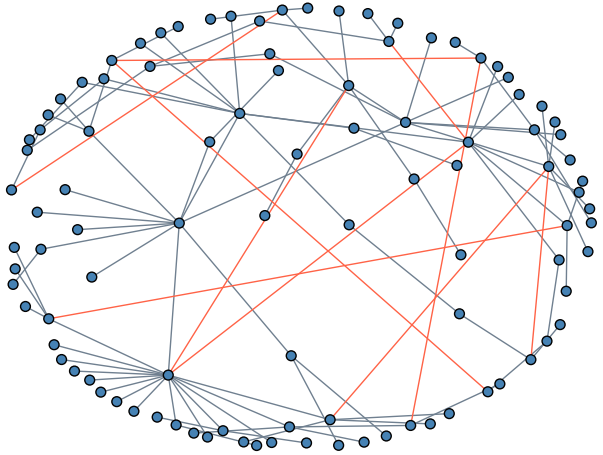}}
\caption{A set of 10 edges added to a Barabasi-Albert scale free network with power law degree distribution. Some of the added edges tend to connect highly connected nodes, while others make low-degree long-distance connections. }  \label{fig:ba}
\end{center}
\end{figure}

%\section{Second-order dynamics and wide-area control in power networks}

\section{Summary and conclusions}
In summary, we considered a network topology design problem in which the objective is to select a set of edges to add to a given graph to optimize the coherence of the resulting network. We showed that network coherence is a submodular function of the network topology, which means that a simple greedy algorithm can be used to select a near optimal edge subset. A modified fast greedy algorithm was developed using Sherman-Morrison pseudoinverse updates and exploiting the submodularity property and provides a computational speed-up of several orders of magnitude, allowing scaling to network sizes far beyond the capabilities of current state of the art semidefinite programming solvers.

Our current work is applying our algorithms to study various network coherence metrics for networks with second-order dynamics and associated power network models and using the algorithms for design of wide-area controllers.

\bibliographystyle{plain}  
\bibliography{refs}  

\appendix{
We prove here via induction that the star graph is the unweighted tree with optimal network coherence.  In other words, in this case each stage of Algorithm \ref{alg1-tree-lapinv} produces a star graph. For the base case, one can directly verify that amongst trees on 4 nodes (i.e., a path and a star), the star is optimal. For the inductive step, assume that a star on $n$ nodes is optimal; we will show that when another node is added the $n+1$ star is also optimal.

Let $L$ be the Laplacian of a star graph of $n$ vertices. Without loss of generality assume 1 is the hub node and $2,\dots,n$ are leaves. The Moore-Penrose pseudoinverse of $L$, $L^\dag$, is given by:
\begin{equation}\label{eq:pinv_entries}
\begin{split}
L^\dag_{1,1}= \dfrac{1-n}{n^2}, \quad L^\dag_{1,i}&=L^\dag_{i,1}= -\dfrac{1}{n^2},\; i=2,\dots,n\\
L^\dag_{i,i}&= \dfrac{n^2-n-1}{n^2},\; i=2,\dots,n\quad  \\L^\dag_{i,j}&=L^\dag_{ji}=	-\dfrac{n+1}{n^2},\; i,j=2,\dots,n.
\end{split}
\end{equation}
Since $L^\dag L=L L^\dag=\dfrac{1}{n} \hat{L}$, where $\hat{L}$ is the Laplacian of a complete graph over vertices $1,\dots,n$, then $I-L L^\dag=\dfrac{1}{n}\mathbf{1}_{n\times n}$, where $\mathbf{1}_{n\times n}\in\mathbb{R}^{n\times n}$ is a matrix of all ones.

Now, consider a graph with $n+1$ vertices where node $n+1$ does not share an edge to any other node and the rest of the vertices form a star graph with node 1 as its root. Denote its Laplacian by $\bar{L}$ where the block formed by the first $n$ rows and columns is matrix $L$ described above and the rest of the entries are zero.
Consider the case where a new edge is to be chosen to connect $n+1$ to any of the vertices so that the trace of the pseudoinverse of the Laplacian of the resulting graph has the smallest value. In other words, it is desired to solve the following optimisation problem:
\begin{equation}\label{eq:opt_one_star}
\begin{split}
\min_{j\in\{1,\dots,n\}} \quad & \mathbf{trace}(\tilde{L}^\dag) \\
\text{s.t.} \quad & \tilde{L}=\bar{L} +m(j)^\top m(j)\\
& m(j)\in\mathbb{Z}^{n+1}, \; m_{n+1}(j)=-1,\; m_j(j)=1,\\
& m_i(j)=0,\; \forall i\in \{1,\dots,n\} \setminus \{j,n+1\}.
\end{split}
\end{equation}
As argued in Section \ref{tree}, 
\begin{equation}
\begin{split}
\mathbf{trace}(\tilde{L}^\dag)&=\mathbf{trace}(\bar{L} +m(j)^\top m(j))^\dag \\
&=  \mathbf{trace}(\bar{L}^\dag)+\dfrac{1+m(j)^\top \bar{L}^\dag m(j)}{\| (I-\bar{L} \bar{L}^\dag)m(j)\|^2}.
\end{split}
\end{equation}
Let $v(j)=(I-\bar{L} \bar{L}^\dag)m(j)$ and $V=I-\bar{L} \bar{L}^\dag$:
$$
v_i(j)=V_{i,j}m_j(j) +V_{i,n+1}m_{n+1}(j)=\begin{cases}
{1}/{n}, & i\neq n+1\\
-1, & i=n+1.
\end{cases}
$$
where $V_{i,j}$ is the $i,j$-th entry of $V$. As a result, $\| (I-\bar{L} \bar{L}^\dag)m(l)\|^2=\| (I-\bar{L} \bar{L}^\dag)m(k)\|^2$ for all $l,k\in\{1,\dots,n\}$. So the value of $m(j)^\top \bar{L}^\dag m(j)$ determines which choice of $j$ results in a smaller trace. Problem \eqref{eq:opt_one_star} can be written as
\begin{equation}\label{eq:opt_one_star_simple}
\begin{split}
\min_{j\in\{1,\dots,n\}} \quad & m(j)^\top \bar{L}^\dag m(j) \\
\text{s.t.} \quad & m(j)\in\mathbb{Z}^{n+1}, \; m_{n+1}(j)=-1,\; m_j(j)=1,\\
& m_i(j)=0,\; \forall i\in \{1,\dots,n\} \setminus \{j,n+1\}.
\end{split}
\end{equation}
Since, the last row and column of $\bar{L}^\dag$ are zeros:
 $$m(j)^\top \bar{L}^\dag m(j) = \bar{L}^\dag_{j,j}.$$
 Remembering \eqref{eq:pinv_entries} it can be seen that for all $n\geq2$:
 $$ \bar{L}^\dag_{1,1}<  \bar{L}^\dag_{i,i},\quad \forall i\in\{2,\dots,n\}.$$ 
 Then the optimisation problem \eqref{eq:opt_one_star} is solved for $j=1$ which means that at each step adding an edge from an isolated node  to the root of an existing star graph is the best strategy to minimize the trace of the pseudoinverse of the Laplacian of the resulting graph.}

\end{document}